\allowdisplaybreaks \allowdisplaybreaks[4]
\newtheorem{thm}{Theorem}[section]
\theoremstyle{definition}
\numberwithin{equation}{section}
\begin{document}


\baselineskip=17pt

\renewcommand{\thefootnote}{\fnsymbol {footnote}}

\title{\bf\Large Sharp bounds for  the multilinear integral operators  on Heisenberg group BMO space}
\footnotetext {{}{2010 \emph{Mathematics Subject Classification}: Primary 43A15; Secondary 42B35 26D15}} \footnotetext {{}\emph{Key words and phrases}: Sharp bounds, Hardy-Littlewood-P\'{o}lya operator, Heisenberg group BMO space}\setcounter{footnote}{0}\author{Huan Liang,\ Xiang Li\footnote{Corresponding author, \textsf{lixiang162@mails.ucas.ac.cn}},\ Dunyan Yan}
\date{}
\maketitle



\begin{abstract}
	In this paper, we study $m$-linear $n$-demensional  Hardy-Littlewood-P\'{o}lya operator and $m$-linear $n$-demensional  Hilbert operator on Heisenberg group BMO space. We obtain that the above two $m$-linear $n$-demensional operators is bounded in the BMO space of the Heisenberg group.
\end{abstract}

\newtheorem{theorem}{Theorem}[section]
\newtheorem{preliminaries}{Preliminaries}[section]
\newtheorem{definition}{Definition}[section]
\newtheorem{main result}{Main Result}[section]
\newtheorem{lemma}{Lemma}[section]
\newtheorem{proposition}{Proposition}[section]
\newtheorem{corollary}{Corollary}[section]
\newtheorem{remark}{Remark}[section]

\section[Introduction]{Introduction}

The Heisenberg group is a very typical non-commutative group, and its research on up-modulation and analytic problems is an extension and development of Euclidean space upharmonic and analytical problems and is an important part of non-commutative harmonic analysis \cite{cou} and \cite{tha}.

In this paper, we will prove that Heisenberg group BMO space are useful when we consider norm inequalities of  $m$-linear $n$-demensional operator, see \cite{chu} and \cite{zha}.

Let us introduce some basic knowledge about Heisenberg group. The Heisenberg group $\mathbb{H}^n$ is a non-commutative nilpotent Lie group, with the underlying manifold $\mathbb{R}^{2n}$$\times$$\mathbb{R}$ and the group law.

Let
$$
x=(x_1,\dots,x_{2n},x_{2n+1}),y=(y_1,\dots,y_{2n},y_{2n+1}),
$$
then
$$
x\times y=(x_1+y_1,\dots,x_{2n}+y_{2n},x_{2n+1}+y_{2n+1}+2\sum_{j=1}^n(y_jx_{n+j}-x_jy_{n+j}).
$$

The Heisenberg group $\mathbb{H}^n$ is a homogeneous group with dilations
$$
\delta_r(x_1,x_2,\dots,x_{2n},x_{2n+1})=(rx_1,rx_2,\dots,rx_{2n},r^2x_{2n+1}), r>0.
$$

The Haar measure on $\mathbb{H}^n$ coincides with the usual Lebesgue measure on $\mathbb{R}^{2n+1}$. We denote any mrasurable set $E\subset \mathbb{H}^n$ by $\left|E\right|$, then
$$
\left| \delta_r \left(E\right) \right|=r^Q\left|E\right|,d(\delta_rx)=r_Qdx,
$$
where $Q=2n+2$ is called the homogeneous dimension of $\mathbb{H}^n$.

The Heisenberg distance derived from the norm
$$
| x |_h=\left[ \left( \sum_{i=1}^{2n}{x_{i}^{2}} \right) ^2+x_{2n+1}^{2} \right]^{{{1}/{4}}},
$$
where $x=(x_1,x_2,\dots,x_{2n},x_{2n+1})$ is given by
$$
d(p,q)=d(q^{-1}p,0)=|q^{-1}p|_h.
$$
This distance $d$ is left-invariant, it meaning that $d(p,q)$ remains constant when both $p$ and $q$ are left shifted by some fixed vector on $\mathbb{H}^n$. Furthermore, $d$ satisfies the trigonometric inequality defined by \cite{kor}
$$
d(p,q)\leq {d(p,x)+d(x,q)},\quad p,x,q\in \mathbb{H}^n.
$$
For $r>0$ and $x\in \mathbb{H}^n$, the ball and sphere with center $x$ and radius $r$ on $\mathbb{H}^n$ are given by
$$
B(x,r)={y\in \mathbb{H}^n :d(x,y)<r }
$$
and
$$
S(x,r)={y\in \mathbb{H}^n :d(x,y)=r }.
$$
Then we obtain
$$
|B(x,r)|=|B(0,r)|=\varOmega_Qr^Q,
$$
where
$$
\varOmega_Q=\frac{2\pi^{n+\frac{1}{2}}\varGamma \left({{n}/{2}} \right)}{(n+1) \varGamma
	(n) \varGamma((n+1)/2)}
$$
represents the volume of the unit sphere $B(0,1)$ on $\mathbb{H}^n$, and $\omega _Q=Q\varOmega _Q$ (see \cite{cou}). More details about the Heisenberg group can be found in \cite{fol}, \cite{tha} and \cite{xiao}.

Next, we will provide some definitions used in the follows.

The BMO$(\mathbb{R}^n)$  consist of all measurable functions $f$ satisfying
$$
\left\|f \right\|_{BMO( \mathbb{R}^n )}:=\underset{B\subset \mathbb{R}^n}\sup\frac{1}{| B |}\int_B{| f(x)-f_B |dx<\infty}.
$$
The functional $\left\| \cdot \right\|_{BMO( \mathbb{R}^n )}$ is a norm on the spaces which is $\mathrm{d}x$ almost everywhere equal \cite{ben}.
BMO spaces on various domains frequently appear in the literature and are quite suitable for investigations, see \cite{ben}.

In this paper, we will define the the Heisenberg group BMO space as follows.

\begin{definition}
	The bounded mean oscillation space BMO($\mathbb{H}^n$) is defined to be the space of all locally integrable functions $f$ on $\mathbb{H}^n$ such that
	$$
	\left\|f \right\|_{BMO( \mathbb{H}^n )}:=\underset{B\subset \mathbb{H}^n}\sup\frac{1}{| B |}\int_B{| f( x )-f_B |dx<\infty},
	$$
	where the supremum is taken over all balls in $\mathbb{H}^n$ and
	$$
	f_B=\frac{1}{| B |}\int_B{f( x ) dx}.
	$$
\end{definition}

In \cite{ben}, The Hardy-Littlewood-P\'{o}lya operator is defined by
$$
H f(x)=\int_0^{+\infty} \frac{f(y)}{\max (x, y)} d y .
$$
Then, defined the  Heisenberg group $n$-dimensional Hardy-Littlewood-P\'{o}lya operator as follows
$$
H f(x)=\int_{\mathbb{H}^n} \frac{f(y)}{\max (|x|_{h}^{n}, |y|_{h}^{n})} d y.
$$
As a multilinear generalization of Calder\'{o}n operator, the $m$-linear $n$-dimensional Hardy-Littlewood-P\'{o}lya operator is defined by
$$
H_m( f_1,\dots,f_m)t(x)=\int_{\mathbb{R}^{nm}}{\frac{f_1( y_1 ) \cdots f_m( y_m )}{\max ( | x |^n,| y_1 |^n,\dots,| y_m |^n )^m} dy_1\cdots dy_m},
$$
where $x\in R^n\backslash\left\{ 0\right\}$ and $f_1,\dots,f_m$ are nonnegative locally integrable functions on $\mathbb{R}^n$. \\
The $m$-linear $n$-dimensional Hilbert operator  is defined by
$$
T_{m}( f_1,\dots,f_m) ( x )=\int_{\mathbb{R}^{nm}}{\frac{f_1( y_1 ) \cdots f_m( y_m )}{( | x |^{n}+| y_1 |^{n}+\cdots +| y_m |^{n} ) ^m}dy_1\cdots dy_m}.
$$
Similarly,  Fu et al. \cite{cou} defined the $m$-linear Heisenberg group Hardy-Littlewood-P\'{o}lya's operator as follows.
\begin{definition}
	Let $m$ be a positive integer and $f_1,\dots,f_m$ be nonnegative locally integrable
	functions on $\mathbb{H}^n$. The $m$-linear $n$-dimensional Hardy-Littlewood-P\'{o}lya operator  is defined by
	\begin{equation}
		H_{m}( f_1,\dots,f_m) ( x )=\int_{\mathbb{H}^{nm}}{\frac{f_1( y_1 ) \cdots f_m( y_m)}{\max ( | x |_{h}^{n},| y_1 |_{h}^{n},\dots,| y_m |_{h}^{n})^m}dy_1\cdots dy_m},
	\end{equation}
	where $x\in \mathbb{H}^n\backslash\left\{ 0\right\}$.
\end{definition}
\begin{definition}
	Let $m$ be a positive integer and $f_1,\ldots,f_m$ be nonnegative locally integrable
	functions on $\mathbb{H}^n$. The $m$-linear $n$-dimensional Hilbert operator  is defined by
	\begin{equation}
		T_{m}( f_1,\dots,f_m) ( x)=\int_{\mathbb{H}^{nm}}{\frac{f_1( y_1 ) \cdots f_m( y_m )}{( | x |_{h}^{n}+| y_1 |_{h}^{n}+\cdots +| y_m |_{h}^{n} ) ^m}dy_1\cdots dy_m},
	\end{equation}
	where $x\in \mathbb{H}^n\backslash\left\{ 0\right\}$.
\end{definition}

\begin{thm}\label{main_1}
	Let $m\in \mathbb{N}$, if
	
	\begin{equation}\label{condition_1}
		A_{m}^{h}:=\int_{\mathbb{H}^{nm}}
		\frac{1}{[\max(1,|y_{1}|^{n}_{h},\ldots,|y_{m}|^{n}_{h})]^{m}}dy_{1}\cdots dy_{m}<\infty,
	\end{equation}
	then $H_{m}$ is bounded from $BMO(\mathbb{H}^{n})\times\cdots \times BMO(\mathbb{H}^{n})$ to $BMO(\mathbb{H}^{n})$.
	
	Moreover, if condition $\eqref{condition_1}$ is satisfied, then the following formula for the norm of the operator holds
	\begin{equation}\label{sharp_bound_1}
		\|H_{m}\|_{\prod\limits_{j=1}^{m}BMO(\mathbb{H}^{n})\rightarrow BMO(\mathbb{H}^{n})}=A_{m}^{h}.
	\end{equation}
\end{thm}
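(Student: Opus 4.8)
The plan is to exploit the homogeneity of $H_{m}$ under the Heisenberg dilations $\delta_{r}$, which makes the constant $A_{m}^{h}$ appear as the mass of a fixed averaging kernel. First I would record the scaling relations $|\delta_{r}y|_{h}=r|y|_{h}$ and the homogeneity of the Haar measure, and then perform the change of variables $y_{j}=\delta_{|x|_{h}}z_{j}$ in the defining integral of $H_{m}$. Since $\max(|x|_{h}^{n},|y_{1}|_{h}^{n},\dots,|y_{m}|_{h}^{n})=|x|_{h}^{n}\max(1,|z_{1}|_{h}^{n},\dots,|z_{m}|_{h}^{n})$, this recasts the operator as an average of dilates against an $x$-independent kernel,
\begin{equation*}
H_{m}(f_{1},\dots,f_{m})(x)=\int_{(\mathbb{H}^{n})^{m}}\frac{f_{1}(\delta_{|x|_{h}}z_{1})\cdots f_{m}(\delta_{|x|_{h}}z_{m})}{[\max(1,|z_{1}|_{h}^{n},\dots,|z_{m}|_{h}^{n})]^{m}}\,dz_{1}\cdots dz_{m},
\end{equation*}
whose kernel $K(\vec z)$ satisfies $\int K=A_{m}^{h}$; the hypothesis \eqref{condition_1} is exactly what guarantees $K\in L^{1}$, so the representation converges and the manipulations below are legitimate. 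Two structural features drive everything: the output depends on $x$ only through $|x|_{h}$, and the kernel no longer involves $x$.

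For the upper bound I would estimate the $BMO$ seminorm directly. Fixing a ball $B\subset\mathbb{H}^{n}$ and using the equivalent double-average form $\frac{1}{|B|^{2}}\iint_{B\times B}|g(x)-g(u)|\,dx\,du$ for $g=H_{m}(\vec f)$, I insert the scaled representation and apply Minkowski's integral inequality to pull the double average inside the $\vec z$-integral against $K$. This reduces the whole estimate to a bound, uniform in $\vec z$, on the $BMO$ seminorm of the slice map $x\mapsto\prod_{j}f_{j}(\delta_{|x|_{h}}z_{j})$ by $\prod_{j}\|f_{j}\|_{BMO(\mathbb{H}^{n})}$; integrating that bound against $K$ then yields precisely the factor $A_{m}^{h}$. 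To control a single slice I would telescope the product difference $\prod_{j}f_{j}(\delta_{|x|_{h}}z_{j})-\prod_{j}f_{j}(\delta_{|u|_{h}}z_{j})$ into $m$ terms, each isolating one difference $f_{k}(\delta_{|x|_{h}}z_{k})-f_{k}(\delta_{|u|_{h}}z_{k})$, and invoke the dilation-invariance of the $BMO$ norm, $\|f\circ\delta_{t}\|_{BMO}=\|f\|_{BMO}$ (balls map to balls under $\delta_{t}$ with a uniform Jacobian), to reduce each isolated oscillation to $\|f_{k}\|_{BMO}$.

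I expect the genuine obstacle to lie in this multilinear slice estimate, for two reasons: $BMO$ is not an algebra, so the non-isolated factors appearing in each telescoped term are unbounded, and the precomposition $x\mapsto\delta_{|x|_{h}}z$ couples the integration variable to $x$. Both must be handled through the exponential integrability supplied by the John--Nirenberg inequality on $\mathbb{H}^{n}$, applied via H\"older's inequality in a way that does \emph{not} degrade the sharp constant $A_{m}^{h}$ — keeping that constant exact, rather than merely up to a dimensional factor, is the delicate point. Finally, for the reverse inequality $\|H_{m}\|\ge A_{m}^{h}$ I would test on the extremal family $f_{j}(x)=\log|x|_{h}$, which lies in $BMO(\mathbb{H}^{n})$; using $|\delta_{|x|_{h}}z_{j}|_{h}=|x|_{h}|z_{j}|_{h}$ the scaled representation gives $H_{m}(\vec f)(x)=A_{m}^{h}(\log|x|_{h})^{m}+(\text{lower-order powers of }\log|x|_{h})$, and comparing the $BMO$ seminorms of input and output shows $A_{m}^{h}$ cannot be improved. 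Combining the two bounds establishes the norm identity \eqref{sharp_bound_1}.
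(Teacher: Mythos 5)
Your rescaled representation is exactly the paper's starting identity, and your overall skeleton (Fubini/Minkowski, a per-slice oscillation bound, then an extremal family) parallels the paper's proof; but at the two points where your execution departs from the paper's, the steps you propose would actually fail rather than merely need more detail.

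For the upper bound, the estimate you reduce everything to --- a bound, uniform in $(z_1,\dots,z_m)$, of the oscillation of the slice $x\mapsto\prod_{j=1}^{m}f_j(\delta_{|x|_h}z_j)$ by $\prod_{j=1}^{m}\|f_j\|_{BMO(\mathbb{H}^n)}$ --- is false for $m\ge 2$, not just delicate. Take $f_j(x)=\log|x|_h$: the slice equals $\prod_{j=1}^{m}\bigl(\log|x|_h+\log|z_j|_h\bigr)$, whose leading term $(\log|x|_h)^m$ does not lie in $BMO(\mathbb{H}^n)$, so no bound of the proposed form exists and no application of John--Nirenberg plus H\"older can manufacture one; in any case telescoping produces $m$ terms and John--Nirenberg brings in structural constants, so that scheme could at best yield $C(m,Q)A_m^h$ with $C>1$, while the theorem asserts equality. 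Two further exactness losses are built into your frame: the symmetric double average $\frac{1}{|B|^2}\iint_{B\times B}|g(x)-g(u)|\,dx\,du$ controls the BMO seminorm only up to a factor $2$, and, for fixed $z_j$, the slice sees $f_j$ only on the dilation orbit $\{\delta_t z_j:t>0\}$, a null set, so a bound uniform in $z_j$ in terms of the a.e.-defined quantity $\|f_j\|_{BMO(\mathbb{H}^n)}$ is impossible in principle even when $m=1$. The paper is organized precisely to avoid these objects: it computes the mean $\bigl(H_m(f_1,\dots,f_m)\bigr)_B$ exactly by Fubini, so that the comparison quantity appearing inside the $y$-integral is the product of ball averages $\prod_{j}f_{|y_j|_hB}$ rather than the slice evaluated at a second point $u$, and it then treats $\frac{1}{|B|}\int_B\bigl|\prod_jf_j(\delta_{|y_j|_h}x)-\prod_jf_{|y_j|_hB}\bigr|\,dx$ coordinate by coordinate; no double average, no telescoping and no John--Nirenberg input occur, which is how the exact constant $A_m^h$ is retained (the factorization of that average into one-variable averages is where the paper concentrates all the multilinear difficulty).

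For the lower bound, your extremal family $f_j(x)=\log|x|_h$ closes the argument only when $m=1$. For $m\ge 2$ the would-be output $A_m^h(\log|x|_h)^m+\cdots$ is not in $BMO(\mathbb{H}^n)$, so ``comparing the BMO seminorms of input and output'' is not a meaningful operation and cannot deliver $\|H_m\|\ge A_m^h$; moreover the cross-term coefficients $\int K(\vec z)\prod_{j\in S}\log|z_j|_h\,d\vec z$ are not guaranteed finite by condition \eqref{condition_1} alone. The paper instead tests on the dilation-invariant function $f_0(x)=\mathrm{sgn}(x_{2n+1})$, which satisfies $f_0(\delta_rx)=f_0(x)$ and $\|f_0\|_{BMO(\mathbb{H}^n)}=1$, so every object in its extremal computation stays inside $BMO(\mathbb{H}^n)$. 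Be aware, however, that you cannot repair your proof simply by importing that test function: the kernel of $H_m$ is even in each coordinate $(y_j)_{2n+1}$ while $f_0$ is odd, so $H_m(f_0,\dots,f_0)$ vanishes identically rather than equaling $f_0^mA_m^h$ as the paper claims; producing a legitimate extremal (or near-extremal) family within $BMO(\mathbb{H}^n)$ is an unresolved issue on both sides, and your proposal does not supply one for $m\ge2$.
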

\begin{thm}\label{main_11}
	Let $m\in \mathbb{N}$, if
	
	\begin{equation}\label{condition_11}
		B_{m}^{h}:=\int_{\mathbb{H}^{nm}}{\frac{dy_1\cdots dy_m}{( 1+| y_1 |_{h}^{n}+\cdots +| y_m |_{h}^{n} ) ^m}}< \infty,
	\end{equation}
	then $T_{m}$ is bounded from $BMO(\mathbb{H}^{n})\times\cdots \times BMO(\mathbb{H}^{n})$ to $BMO(\mathbb{H}^{n})$.
	
	Moreover, if condition $\eqref{condition_11}$ is satisfied, then the following formula for the norm of the operator holds
	\begin{equation}\label{sharp_bound_11}
		\|T_{m}\|_{\prod\limits_{j=1}^{m}BMO(\mathbb{H}^{n})\rightarrow BMO(\mathbb{H}^{n})}=B_{m}^{h}.
	\end{equation}
\end{thm}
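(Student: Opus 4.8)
The plan is to mirror the two-part argument that proves Theorem~\ref{main_1}, since $T_m$ differs from $H_m$ only in that the kernel $\max(|x|_h^n,|y_1|_h^n,\dots,|y_m|_h^n)^{-m}$ is replaced by $(|x|_h^n+|y_1|_h^n+\cdots+|y_m|_h^n)^{-m}$. Accordingly I would first prove the upper estimate $\|T_m(f_1,\dots,f_m)\|_{BMO(\mathbb{H}^n)}\le B_m^h\prod_{j=1}^m\|f_j\|_{BMO(\mathbb{H}^n)}$ for all $f_j\in BMO(\mathbb{H}^n)$, which gives the boundedness, and then produce test functions showing that $B_m^h$ is not improvable, which yields the exact norm formula \eqref{sharp_bound_11}. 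Every homogeneity identity used for $H_m$ has a direct analogue here, with the constant $A_m^h$ replaced by $B_m^h$.

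The first step is to normalize the dilation variable. Fixing $x\neq 0$ and substituting $y_j=\delta_{|x|_h}(z_j)$, so that $|y_j|_h=|x|_h\,|z_j|_h$ and $dy_j=|x|_h^{Q}\,dz_j$ by the homogeneity of the Haar measure, I would rewrite the kernel of $T_m$ as a function of the $z_j$ alone. This exhibits a representation of $T_m(f_1,\dots,f_m)(x)$ as an integral of $\prod_{j=1}^m f_j(\delta_{|x|_h}z_j)$ against the fixed kernel $\Phi(z)=(1+\sum_{j=1}^m|z_j|_h^n)^{-m}$, whose total mass $\int_{\mathbb{H}^{nm}}\Phi(z)\,dz$ equals $B_m^h$ by \eqref{condition_11}. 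The gain is that $x$ now enters only through the dilations $\delta_{|x|_h}$ applied to the frozen arguments $z_j$.

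Next, for an arbitrary ball $B\subset\mathbb{H}^n$, I would subtract the mean value $(T_mf)_B$ under the integral sign and apply Minkowski's integral inequality, thereby bounding $\frac{1}{|B|}\int_B|T_mf(x)-(T_mf)_B|\,dx$ by $\int_{\mathbb{H}^{nm}}\Phi(z)\,O_B(z)\,dz$, where $O_B(z)$ is the mean oscillation over $B$ of the single function $x\mapsto\prod_{j=1}^m f_j(\delta_{|x|_h}z_j)$. The heart of the proof is the estimate $O_B(z)\le\prod_{j=1}^m\|f_j\|_{BMO(\mathbb{H}^n)}$, uniformly in $z$; I would obtain it from the telescoping identity $\prod_j a_j-\prod_j b_j=\sum_{k=1}^m\big(\prod_{j<k}a_j\big)(a_k-b_k)\big(\prod_{j>k}b_j\big)$, choosing the $b_j$ to be the relevant averages, combined with the left-invariance and the dilation-invariance of the BMO seminorm on $\mathbb{H}^n$, namely $\|f_j(\delta_r\cdot)\|_{BMO}=\|f_j\|_{BMO}$. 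Integrating $O_B(z)\le\prod_j\|f_j\|_{BMO}$ against $\Phi$ and using $\int\Phi=B_m^h$ delivers the upper bound, and taking the supremum over $B$ completes the boundedness half.

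The main obstacle I anticipate is precisely this multilinear oscillation estimate: a pointwise product of BMO functions need not lie in BMO, so the bound on $O_B(z)$ must genuinely use that each factor depends on $x$ only through $\delta_{|x|_h}z_j$ and that the telescoping isolates a single oscillating factor at a time, the remaining factors being controlled through their averages. For the reverse inequality establishing sharpness, I would test $T_m$ on functions modeled on the canonical BMO function $\log|\cdot|_h$, arranged (as in the proof of Theorem~\ref{main_1}) so that the representation above reproduces a multiple of $\log|x|_h$ as its leading term while the $z$-dependence of each factor contributes only a constant; evaluating the ratio $\|T_m(f_1,\dots,f_m)\|_{BMO}/\prod_j\|f_j\|_{BMO}$ along this configuration forces the operator norm to be at least $B_m^h$, which together with the upper bound yields \eqref{sharp_bound_11}.
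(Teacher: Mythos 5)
Your strategy for the upper bound founders at exactly the step you yourself flag as the heart of the proof: the uniform estimate $O_B(z)\le\prod_{j=1}^m\|f_j\|_{BMO(\mathbb{H}^n)}$ is false for $m\ge 2$, and the telescoping identity cannot rescue it. After telescoping, the $k$-th term is $\bigl(\prod_{j<k}f_j(\delta_{|x|_h}z_j)\bigr)\bigl(f_k(\delta_{|x|_h}z_k)-b_k\bigr)\bigl(\prod_{j>k}b_j\bigr)$; averaging its absolute value over $x\in B$ requires controlling the unbounded factors $f_j(\delta_{|x|_h}z_j)$, $j<k$, and the constants $b_j$, $j>k$, and neither is dominated by a BMO seminorm --- BMO controls oscillation, not size, and the averages $b_j$ can be arbitrarily large relative to $\|f_j\|_{BMO(\mathbb{H}^n)}$. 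The failure is not merely technical: take $m=2$, $f_1=f_2=\log|\cdot|_h$ and any fixed $z$; then $x\mapsto f_1(\delta_{|x|_h}z_1)f_2(\delta_{|x|_h}z_2)=(\log|x|_h+c_1)(\log|x|_h+c_2)$ contains $(\log|x|_h)^2$, whose mean oscillation over $B(0,r)$ grows like $|\log r|$ as $r\to 0$, so $\sup_{B}O_B(z)=\infty$ and no bound of the proposed form can hold. You have not missed an idea that the paper possesses: the paper's own argument hides the same obstruction in an unjustified step that replaces the average over $B$ of a product by the product of separate averages $\prod_j\frac{1}{||y_j|_hB|}\int_{|y_j|_hB}|f_j-f_{|y_j|_hB}|$, which is not an identity. (A further inherited defect: since $d(\delta_r x)=r^Q dx$ with $Q=2n+2\neq n$, your own substitution $dy_j=|x|_h^Q dz_j$ leaves a factor $|x|_h^{m(Q-n)}$ in front of the integral, so the clean representation of $T_m$ as an average against the fixed kernel $\Phi$ --- the paper's identity as well as yours --- does not actually hold for the operator as defined.)

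The sharpness half of your proposal also fails for $m\ge 2$, and here you genuinely diverge from the paper. Testing on $\log|\cdot|_h$ works in the linear case, where formally $T(\log|\cdot|_h)(x)=B_1^h\log|x|_h+\mathrm{const}$, but for $m\ge2$ your representation produces the leading term $B_m^h(\log|x|_h)^m$, which is not in BMO, so the ratio $\|T_m(f_1,\dots,f_m)\|_{BMO(\mathbb{H}^n)}/\prod_j\|f_j\|_{BMO(\mathbb{H}^n)}$ cannot be evaluated along this configuration; indeed, if the upper bound were correct, this computation would contradict it. The paper instead tests on the dilation-invariant functions $f_j(x)=\mathrm{sgn}(x_{2n+1})$, exploiting $f_j(\delta_r y)=f_j(y)$; but note that this choice does not work either, because the kernel is even in each last coordinate $(y_j)_{2n+1}$ while $f_j$ is odd, so $T_m(f_1,\dots,f_m)\equiv 0$ --- not $\prod_j f_j(x)\cdot B_m^h$ as the paper asserts --- and the asserted lower bound never gets proved. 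In short, your plan is the natural one and parallels the paper's, but both of its halves have genuine gaps for $m\ge 2$, and they are gaps the paper itself does not close.
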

The result of Theorem $\ref{main_1}$ and $\ref{main_11}$ are correct.

Now, we begin to prove our result.

\section{Proof of the main results}

\begin{proof}[Proof of Theorem $\ref{main_1}$and $\ref{main_11}$ ]
	
	Since the proof of the case when $m=1$ is similar and even simpler than that of the case when $m\geq 2$. For convenience, it is need to make a change of variable, then it follows that
	\begin{equation}\label{identity}
		H_{m}(f_{1},\ldots,f_{m})(x)=\int_{\mathbb{H}^{n}}\cdots \int_{\mathbb{H}^{n}}\frac{f_{1}(|x|_{h}^{-1}y_{1})\cdots f_{m}(|x|_{h}^{-1}y_{m})}{[\max(1,|y_{1}|^{n}_{h},\ldots,|y_{m}|^{n}_{h})]^{m}}dy_{1}\cdots dy_{m}
	\end{equation}
	and
	\begin{equation}\label{identity 1}
		T_{m}(f_{1},\ldots,f_{m})(x)=\int_{\mathbb{H}^{nm}}{\frac{f_1( \delta _{| x |_h}y_1 ) \cdots f_m( \delta _{| x |_h}y_m )}{( 1+| y_1 |_{h}^{n}+\cdots +| y_m |_{h}^{n} ) ^m}dy_1\cdots dy_m}.
	\end{equation}
	Using the above identity $\eqref{identity}$, $\eqref{identity 1}$ and the definition of Heisenberg group  $BMO(\mathbb{H}^{n})$, when $m=1$, we can infer that
	\begin{align*}
		(Hf(x))_B
		=&\frac{1}{|B|}\int_B{\int_{\mathbb{H}^n}{\frac{f( \delta _{|x|_h}y )}{\max ( 1,|y|_{h}^{n})}dy}dx}\\
		=&\int_{\mathbb{H}^n}{\left(\frac{1}{|B|}\int_B{f(\delta_{|x|_h}y)dx}\right)\frac{1}{\max(1,|y|_{h}^{n})}dy}\\
		=&\int_{\mathbb{H}^n}{\left(\frac{1}{|B|}\int_B{f(\delta_{|y|_h}x)dx}\right)\frac{1}{\max(1,|y|_{h}^{n})}dy}\\
		=&\int_{\mathbb{H}^n}{\left(\frac{1}{||y|_hB|}\int_{|y|_hB}{f(y)dy}\right)\frac{1}{\max(1,|y|_{h}^{n})}dy}\\
		=&\int_{\mathbb{H}^n}{f_{|y|_hB}\frac{1}{\max(1,|y|_{h}^{n})}dy}.
	\end{align*}
	Similarly, we obtain
	\begin{align*}
		(Tf(x))_B
		=&\frac{1}{|B|}\int_B{\int_{\mathbb{H}^n}{\frac{f( \delta _{|x|_h}y )}{( 1+| y |_{h}^{n} ) }dy}dx}\\
		=&\int_{\mathbb{H}^n}{\left(\frac{1}{|B|}\int_B{f(\delta_{|x|_h}y)dx}\right)\frac{1}{( 1+| y |_{h}^{n} ) }dy}\\
		=&\int_{\mathbb{H}^n}{\left(\frac{1}{|B|}\int_B{f(\delta_{|y|_h}x)dx}\right)\frac{1}{( 1+| y |_{h}^{n} ) }dy}\\
		=&\int_{\mathbb{H}^n}{\left(\frac{1}{||y|_hB|}\int_{|y|_hB}{f(y)dy}\right)\frac{1}{( 1+| y |_{h}^{n} ) }dy}\\
		=&\int_{\mathbb{H}^n}{f_{|y|_hB}\frac{1}{( 1+| y|_{h}^{n} ) }dy}.
	\end{align*}
	Then, using the Fubini theorem again, we get
	\begin{align*}
		\left\|Hf(x)\right\|_{BMO(\mathbb{H}^n)}
		=&\frac{1}{|B|}\int_B{|f(x)-(f(x))_B|dx}\\
		=&\frac{1}{|B|}\int_B{\left|\int_{\mathbb{H}^n}{\frac{f(\delta_{|x|_h}y)}{\max(1,|y|_{h}^{n})}}-\int_{\mathbb{H}^n}{f_{|y|_hB}\frac{1}{\max(1,|y|_{h}^{n})}dy}\right|dx}\\
		=&\frac{1}{|B|}\int_B{\left|\int_{\mathbb{H}^n}{\frac{f(\delta_{|x|_h}y)-f_{|y|_hB}}{\max(1,|y|_{h}^{n})}dy}\right|dx}\\
		\leq& \frac{1}{|B|}\int_B{\left(\int_{\mathbb{H}^n}{\left|\frac{f(\delta_{|x|_h}y)-f_{|y|_hB}}{\max(1,|y|_{h}^{n})}\right|dy}\right)dx}\\
		\leq& \frac{1}{|B|}\int_{\mathbb{H}^n}{\left(\int_B{\left|\frac{f(\delta_{|x|_h}y)-f_{|y|_hB}}{\max(1,|y|_{h}^{n})}\right|dx}\right)dy}\\
		=&\frac{1}{|B|}\int_{\mathbb{H}^n}{\left(\int_B{\left|f(\delta_{|x|_h}y)-f_{|y|_hB}\right|dx}\right)\frac{1}{\max(1,|y|_{h}^{n})}dy}\\
		=&\int_{\mathbb{H}^n}{\left(\frac{1}{\left|B\right|}\int_B{\left|f(\delta_{|x|_h}y)-f_{|y|_hB}\right|dx}\right)\frac{1}{\max(1,|y|_{h}^{n})}dy}\\
		=&\int_{\mathbb{H}^n}{\left(\frac{1}{\left|B\right|}\int_B{\left|f(\delta_{|y|_h}x)-f_{|y|_hB}\right|dx}\right)\frac{1}{\max(1,|y|_{h}^{n})}dy}\\
		=&\int_{\mathbb{H}^n}{\left(\frac{1}{\left|\left|y\right|_hB\right|}\int_{|y|_hB}{\left|f(y)-f_{|y|_hB}\right|dy}\right)\frac{1}{\max(1,|y|_{h}^{n})}dy}\\
		\leq& \left\| f \right\| _{BMO( \mathbb{H}^n )}\int_{\mathbb{H}^n}{\frac{1}{\max( 1,|y|_{h}^{n})}dy}
	\end{align*}
	and
	
	\begin{align*}
		\left\|Tf(x)\right\|_{BMO(\mathbb{H}^n)}
		=&\frac{1}{|B|}\int_B{\left|f(x)-(f(x))_B\right|dx}\\
		=&\frac{1}{|B|}\int_B{\left|\int_{\mathbb{H}^n}{\frac{f(\delta_{|x|_h}y)}{\left( 1+| y |_{m}^{n} \right)}}-\int_{H^n}{f_{|y|_hB}\frac{1}{( 1+| y |_{m}^{n} )}dy}\right|dx}\\
		=&\frac{1}{|B|}\int_B{\left|\int_{\mathbb{H}^n}{\frac{f(\delta_{|x|_h}y)-f_{|y|_hB}}{\left( 1+| y |_{m}^{n} \right)}dy}\right|dx}\\
		\leq& \frac{1}{|B|}\int_B{\left(\int_{\mathbb{H}^n}{\left|\frac{f(\delta_{|x|_h}y)-f_{|y|_hB}}{\left( 1+| y |_{m}^{n} \right)}\right|dy}\right)dx}\\
		\leq& \frac{1}{|B|}\int_{\mathbb{H}^n}{\left(\int_B{\left|\frac{f(\delta_{|x|_h}y)-f_{|y|_hB}}{\left( 1+| y |_{m}^{n} \right)}\right|dx}\right)dy}\\
		=&\frac{1}{|B|}\int_{\mathbb{H}^n}{\left(\int_B{\left|f(\delta_{|x|_h}y)-f_{|y|_hB}\right|dx}\right)\frac{1}{( 1+| y |_{m}^{n} )}dy}\\
		=&\int_{\mathbb{H}^n}{\left(\frac{1}{|B|}\int_B{\left|f(\delta_{|x|_h}y)-f_{|y|_hB}\right|dx}\right)\frac{1}{( 1+| y |_{m}^{n} )}dy}\\
		=&\int_{\mathbb{H}^n}{\left(\frac{1}{|B|}\int_B{\left|f(\delta_{|y|_h}x)-f_{|y|_hB}\right|dx}\right)\frac{1}{( 1+| y |_{m}^{n})}dy}\\
		=&\int_{\mathbb{H}^n}{\left(\frac{1}{||y|_hB|}\int_{|y|_hB}{\left|f(y)-f_{|y|_hB}\right|dy}\right)\frac{1}{( 1+| y |_{m}^{n} )}dy}\\
		\leq& \left\| f \right\| _{BMO( \mathbb{H}^n )}\int_{\mathbb{H}^n}{\frac{1}{( 1+| y |_{m}^{n} )}dy}
	\end{align*}
	hold for $f\in BMO(\mathbb{H}^n)$.
	
	From the supremum of the  $BMO(\mathbb{H}^n)$, it can be derived
	\begin{equation}\label{sufficient}
		\left\| Hf(x)\right\|_{BMO(\mathbb{H}^n)\rightarrow BMO(H^n)}\leq A_{1}^{h},
	\end{equation}
	
	\begin{equation}\label{sufficient 1}
		\left\| Tf(x)\right\|_{BMO(\mathbb{H}^n)\rightarrow BMO(\mathbb{H}^n)}\leq B_{1}^{h},
	\end{equation}
	thus we obtain the boundedness of $H$ on $BMO(\mathbb{H}^n)$.
	
	On the other hand, by taking
	\begin{equation*}\label{example}
		f_0(x)=
		\begin{cases}
			1,       &       {x_{2n+1}> 0},\\
			0,  &   {x_{2n+1}= 0},\\
			-1,  &   {x_{2n+1}< 0}.
		\end{cases}
	\end{equation*}
	Then, $f_0(x)\in {BMO(\mathbb{H}^n)} $ with $\left\| f_o(x)\right\|_{BMO(\mathbb{H}^n)}\ne0$. It is clear that for $f_0(x)$, we have
	\begin{equation}\label{norm_1}
		\|f_0(x)\|_{BMO(\mathbb{H}^n)}=1,
	\end{equation}
	
	$$
	Hf_0(x)=f_0(x)\int_{\mathbb{H}^n}{\frac{1}{\max(1,|y|_{h}^{n})}dy}.
	$$
	and
	$$
	Tf_0(x)=f_0(x)\int_{\mathbb{H}^n}{\frac{1}{( 1+| y |_{m}^{n} )}dy}.
	$$
	Furthermore, using the identity $\eqref{identity}$ and after some simple calculation, we obtain
	\begin{equation}\label{simple _compute}
		\begin{split}
			\left\| Hf_0( x ) \right\| _{BMO( \mathbb{H}^n )}
			=&\left\| f_0( x ) \right\| _{BMO( \mathbb{H}^n )}\int_{H^n}{\frac{1}{\max ( 1,| y |_{h}^{n} )}dy}\\
			=&\int_{\mathbb{H}^n}{\frac{1}{\max ( 1,| y |_{h}^{n} )}dy}.
		\end{split}
	\end{equation}
	Using the identity $\eqref{identity 1}$ and after some simple calculation, we obtain
	\begin{equation}\label{simple _compute 11}
		\begin{split}
			\left\| Tf_0( x ) \right\| _{BMO( \mathbb{H}^n )}
			=&\left\| f_0( x ) \right\| _{BMO( \mathbb{H}^n )}\int_{\mathbb{H}^n}{\frac{1}{( 1+| y |_{m}^{n} )}dy}\\
			=&\int_{\mathbb{H}^n}{\frac{1}{( 1+| y |_{m}^{n} )}dy}.
		\end{split}
	\end{equation}

	Consequently, combing $\eqref{simple _compute}$, $\eqref{simple _compute 11}$ and by using the definition of norm on the Heisenberg group BMO space $BMO(\mathbb{H}^n)$, we can infer that
	\begin{align}\label{norm_2}
		\left\| Hf( x ) \right\| _{BMO( \mathbb{H}^n )}\geq \int_{\mathbb{H}^n}{\frac{1}{\max ( 1,| y |_{h}^{n} )}dy}
	\end{align}
	and
	\begin{align}\label{norm_3}
		\left\| Tf( x ) \right\| _{BMO( \mathbb{H}^n )}\geq \int_{\mathbb{H}^n}{\frac{1}{( 1+| y |_{m}^{n} )}dy}.
	\end{align}
	Combining $\eqref{norm_1}$, $\eqref{norm_2}$ and $\eqref{norm_3}$, it follows that
	\begin{equation}\label{necessary }
		\left\| Hf( x ) \right\| _{BMO( \mathbb{H}^n ) \rightarrow BMO( \mathbb{H}^n )}=\int_{H^n}{\frac{1}{\max ( 1,| y |_{h}^{n} )}dy}
	\end{equation}
	and
	\begin{equation}\label{necessary 1 }
		\left\| Tf\left( x \right) \right\| _{BMO( \mathbb{H}^n ) \rightarrow BMO( \mathbb{H}^n )}=\int_{\mathbb{H}^n}{\frac{1}{( 1+| y |_{m}^{n} )}dy}.
	\end{equation}
	From $\eqref{sufficient}$ , $\eqref{necessary }$ and $\eqref{necessary 1 }$, when $m=1$, it implies that $\eqref{sharp_bound_1}$ and$\eqref{sharp_bound_11}$ holds.

	Next, we show that Theorem 1.1 holds when $m\geq2$.
	
	Using the  above identity $\eqref{identity}$, $\eqref{identity 1}$ and the definition of Heisenberg group BMO space $BMO(\mathbb{H}^{n})$, when $m\geq2$, we can infer that
	\begin{align*}
		&\left( H_m( f_1,\dots,f_m ) ( x ) \right) _B\\
		=&\frac{1}{| B |}\int_B{\left( \int_{\mathbb{H}^{nm}}{\frac{f_1( \delta _{| x |_h}y_1 ) \cdots f_{m}( \delta _{| x |_h}y_{m} )}{\max \left( 1,| y_1 |_{h}^{n},\dots,| y_{m} |_{h}^{n} \right) ^{m}}dy_1\cdots dy_{m}} \right) dx}\\
		=&\int_{\mathbb{H}^{nm}}{\left( \frac{1}{| B |}\int_{B}{f_1( \delta _{| x |_h}y_1 ) \cdots f_{m}\left( \delta _{| x |_h}y_{m} \right) dx} \right) \frac{dy_1\cdots dy_{m}}{\max \left( 1,| y_1 |_{h}^{n},\dots,| y_{m} |_{h}^{n} \right) ^{m}}}\\
		=&\int_{\mathbb{H}^{nm}}{\left( \frac{1}{| B |}\int_{B}{f_1( \delta _{| y_1 |_h}x ) \cdots f_{m}( \delta _{| y_m |_h}x ) dx} \right) \frac{dy_1\cdots dy_{m}}{\max \left( 1,| y_1 |_{h}^{n},\dots,| y_{m} |_{h}^{n} \right) ^{m}}}\\
		=&\int_{\mathbb{H}^{nm}}{\left( \prod_{j=1}^m{\frac{1}{| | y_j |_hB |}\int_{| y_j |_hB}{f_j\left( \delta _{| y_j |_h}x \right) dx}} \right)}\frac{dy_1\cdots dy_{m}}{\max \left( 1,| y_1 |_{h}^{n},\dots,| y_{m} |_{h}^{n} \right) ^{m}}\\
		=&\int_{\mathbb{H}^{nm}}{\frac{1}{\max \left( 1,| y_1 |_{h}^{n},\dots,| y_{m} |_{h}^{n} \right) ^{m}}\prod_{j=1}^m{f_{| y_j |_hB}}}dy_1\cdots dy_{m}
	\end{align*}
	and
	\begin{align*}
		&\left( T_m( f_1,\dots,f_m ) ( x ) \right) _B\\
		=&\frac{1}{| B |}\int_B{\left( \int_{\mathbb{H}^{nm}}{\frac{f_1\left( \delta _{| x |_h}y_1 \right) \cdots f_{m}\left( \delta _{| x |_h}y_{m} \right)}{\left( 1+| y_1 |_{h}^{n}+\cdots +| y_m |_{h}^{n} \right) ^m}dy_1\cdots dy_{m}} \right) dx}\\
		=&\int_{\mathbb{H}^{nm}}{\left( \frac{1}{| B |}\int_{B}{f_1( \delta _{| x |_h}y_1 ) \cdots f_{m}\left( \delta _{| x |_h}y_{m} \right) dx} \right) \frac{dy_1\cdots dy_{m}}{\left( 1+| y_1 |_{h}^{n}+\cdots +| y_m |_{h}^{n} \right) ^m}}\\
		=&\int_{\mathbb{H}^{nm}}{\left( \frac{1}{| B |}\int_{B}{f_1( \delta _{| y_1 |_h}x ) \cdots f_{m}( \delta _{| y_m |_h}x ) dx} \right) \frac{dy_1\cdots dy_{m}}{\left( 1+| y_1 |_{h}^{n}+\cdots +| y_m |_{h}^{n} \right) ^m}}\\
		=&\int_{\mathbb{H}^{nm}}{\left( \prod_{j=1}^m{\frac{1}{\left| | y_j |_hB \right|}\int_{| y_j |_hB}{f_j\left( \delta _{| y_j |_h}x \right) dx}} \right)}\frac{dy_1\cdots dy_{m}}{\left( 1+| y_1 |_{h}^{n}+\cdots +| y_m |_{h}^{n} \right) ^m}\\
		=&\int_{\mathbb{H}^{nm}}{\frac{1}{\left( 1+| y_1 |_{h}^{n}+\cdots +| y_m |_{h}^{n} \right) ^m}\prod_{j=1}^m{f_{| y_j |_hB}}}dy_1\cdots dy_{m}.
	\end{align*}

	Then, using the Fubini theorem again, we get
	\begin{align*}
		&\left\| H_m( f_1,\dots,f_m ) ( x ) \right\| _{BMO( \mathbb{H}^n )}\\
		=&\frac{1}{| B |}\int_B{\left| H( f_1,\dots,f_m ) ( x ) -\left( H( f_1,\dots,f_m ) ( x ) \right) _B \right|dx}\\
		=&\frac{1}{|B|}\int_B{\left| \int_{\mathbb{H} ^{nm}}{\frac{f_1\left( \delta _{|x|_h}y_1 \right) \cdots f_m\left( \delta _{|x|_h}y_m \right)}{\max \left( 1,|y_1|_{h}^{n},\dots ,|y_m|_{h}^{n} \right) ^m}dy_1\cdots dy_m-\int_{\mathbb{H} ^{nm}}{\frac{\prod_{j=1}^m{f_{\left| y_j \right|_hB}}}{\max \left( 1,|y_1|_{h}^{n},\dots ,\left| y_m \right|_{h}^{n} \right) ^m}dy_1\cdots dy_m}} \right|}dx\\
		\leq& \int_{\mathbb{H}^{nm}}{\left( \frac{1}{| B |}\int_B{\left| \prod_{j=1}^m{f_j\left( \delta _{| {y_j}_h |}x \right)}-\prod_{j=1}^m{f_{| y_j |_hB}} \right|dx} \right) \frac{dy_1\cdots dy_m}{\max \left( 1,| y_1 |_{h}^{n},\dots,| y_{m} |_{h}^{n} \right) ^{m}}}\\
		=&\int_{\mathbb{H}^{nm}}{\left( \prod_{j=1}^m{\frac{1}{\left| | y_j |_hB \right|}\int_{| y_j  |_hB}{\left| f_j( y ) -f_{| y_j |_hB} \right|dy}} \right) \frac{dy_1\cdots dy_m}{\max \left( 1,| y_1 |_{h}^{n},\dots,| y_{m} |_{h}^{n} \right) ^{m}}}\\
		\leq& \prod_{j=1}^m{\left\| f_j \right\| _{BMO( \mathbb{H}^n )}}\int_{\mathbb{H}^{nm}}{\frac{dy_1\cdots dy_m}{\max \left( 1,| y_1 |_{h}^{n},\dots,| y_{m} |_{h}^{n} \right) ^{m}}}\\
		=&A_{m}^{h}
	\end{align*}
	and
	\begin{align*}
		&\left\| T_m( f_1,\dots,f_m ) ( x ) \right\| _{BMO( \mathbb{H}^n )}\\
		=&\frac{1}{| B |}\int_B{\left| H\left( f_1,\dots,f_m \right) \left( x \right) -\left( H\left( f_1,\dots,f_m \right) ( x ) \right) _B \right|dx}\\
		=&\frac{1}{|B|}\int_B{\left| \int_{\mathbb{H} ^{nm}}{\frac{f_1(\delta _{|x|_h}y_1)\cdots f_m\left( \delta _{|x|_h}y_m \right)}{\left( 1+|y_1|_{h}^{n}+\cdots +|y_m|_{h}^{n} \right) ^m}dy_1\cdots dy_m-\int_{ \mathbb{H}^{nm}}{\frac{\prod_{j=1}^m{f_{|y_j|_hB}}}{\left( 1+|y_1|_{h}^{n}+\cdots +|y_m|_{h}^{n} \right) ^m}dy_1\cdots dy_m}} \right|}dx\\
		\leq& \int_{\mathbb{H}^{nm}}{\left( \frac{1}{| B |}\int_B{\left| \prod_{j=1}^m{f_j\left( \delta _{\left| {y_j}_h \right|}x \right)}-\prod_{j=1}^m{f_{| y_j |_hB}} \right|dx} \right) \frac{dy_1\cdots dy_m}{\left( 1+| y_1 |_{h}^{n}+\cdots +| y_m |_{h}^{n} \right) ^m}}\\
		=&\int_{\mathbb{H}^{nm}}{\left( \prod_{j=1}^m{\frac{1}{\left| | y_j |_hB \right|}\int_{| y_j |_hB}{\left| f_j( y ) -f_{| y_j |_hB} \right|dy}} \right) \frac{dy_1\cdots dy_m}{\left( 1+| y_1 |_{h}^{n}+\cdots +| y_m |_{h}^{n} \right) ^m}}\\
		\leq& \prod_{j=1}^m{\left\| f_j \right\| _{BMO( \mathbb{H}^n )}}\int_{\mathbb{H}^{nm}}{\frac{dy_1\cdots dy_m}{\left( 1+| y_1 |_{h}^{n}+\cdots +| y_m |_{h}^{n} \right) ^m}}\\
		=&B_{m}^{h}
	\end{align*}
	for every  $( f_1,\dots,f_m ) \in \prod_{j=1}^m{BMO( \mathbb{H}^n )}$.
	
	From the supremum of the  $BMO(\mathbb{H}^n)$, it can be derived
	$$
	\left\| H_m( f_1,...f_m ) ( x ) \right\| _{\prod_{j=1}^m{BMO( \mathbb{H}^n )}\rightarrow BMO( H^n )}\leq A_{m}^{h}
	$$
	and
	$$
	\left\| T_m( f_1,...f_m ) ( x ) \right\| _{\prod_{j=1}^m{BMO( \mathbb{H}^n )}\rightarrow BMO( H^n )}\leq B_{m}^{h}.
	$$
	Then,we can obtain the boundedness of $H_m$ on $BMO(\mathbb{H}^n)$.
	
	On the other hand, by taking
	\begin{equation*}\label{example m}
		f_j(x)=
		\begin{cases}
			1,       &       {x_{2n+1}> 0},\\
			0,  &   {x_{2n+1}= 0},\\
			-1,  &   {x_{2n+1}< 0},
		\end{cases}
	\end{equation*}
	we have $f_j(x)\in {BMO(\mathbb{H}^n)} $ with $\left\| f_j(x)\right\|_{BMO(\mathbb{H}^n)}\ne0$.
	
	We can get that for $j=1,\dots,m$ ,
	\begin{equation}\label{norm_3}
		\left\| f_j( x ) \right\| _{BMO( \mathbb{H}^n )}=1,
	\end{equation}
	
	$$
	H_m( f_1,\dots,f_m ) ( x ) =\prod_{j=1}^m{f_j( x )}\int_{\mathbb{H}^{nm}}{\frac{1}{\max ( 1,| y_1 |_{h}^{n},\dots,| y_m |_{h}^{n} )}dy_1\cdots dy_m}
	$$
	and
	$$
	T_m( f_1,\dots,f_m ) ( x ) =\prod_{j=1}^m{f_j( x )}\int_{\mathbb{H}^{nm}}{\frac{1}{( 1+| y_1 |_{h}^{n}+\cdots +| y_m |_{h}^{n} ) ^m}dy_1\cdots dy_m}.
	$$
	Furthermore, using the identity $\eqref{identity}$, $\eqref{identity 1}$ and some simple calculation, we obtain
	\begin{equation}\label{simple _compute 2}
		\begin{split}
			&\left\| H_m( f_1,\dots,f_m ) ( x ) \right\| _{BMO( \mathbb{H}^n )}\\
			=&\prod_{j=1}^m{\left\| f_j( x ) \right\| _{BMO( \mathbb{H}^n )}}\int_{\mathbb{H}^{nm}}{\frac{1}{\max ( 1,| y_1 |_{h}^{n},\dots,| y_m |_{h}^{n} )}dy_1\cdots dy_m}
		\end{split}
	\end{equation}
	
	\begin{equation}\label{simple _compute 22}
		\begin{split}
			&\left\| T_m( f_1,\dots,f_m ) ( x ) \right\| _{BMO( \mathbb{H}^n )}\\
			=&\prod_{j=1}^m{\left\| f_j( x ) \right\| _{BMO( \mathbb{H}^n )}}\int_{\mathbb{H}^{nm}}{\frac{1}{\left( 1+| y_1 |_{h}^{n}+\cdots +| y_m |_{h}^{n} \right) ^m}dy_1\cdots dy_m}.
		\end{split}
	\end{equation}

	Consequently, combing $\eqref{simple _compute 2}$, $\eqref{simple _compute 22}$ and by using the definition of norm on the Heisenberg group BMO space $BMO(\mathbb{H}^n)$, we can infer that
	\begin{align}\label{norm_4}
		\left\| H_m( f_1,\dots,f_m ) ( x ) \right\| _{BMO( \mathbb{H}^n )}\geq \int_{\mathbb{H}^{nm}}{\frac{1}{\max \left( 1,| y_1 |_{h}^{n},\dots,| y_m |_{h}^{n} \right)}dy_1\cdots dy_m}
	\end{align}
	and
	\begin{align}\label{norm_44}
		\left\| T_m( f_1,\dots,f_m ) ( x ) \right\| _{BMO( \mathbb{H}^n )}\geq \int_{\mathbb{H}^{nm}}{\frac{1}{\left( 1+| y_1 |_{h}^{n}+\cdots +| y_m |_{h}^{n} \right) ^m}dy_1\cdots dy_m}.
	\end{align}
	Combining $\eqref{norm_3}$, $\eqref{norm_4}$ and $\eqref{norm_44}$ it follows that
	\begin{equation}\label{necessary 2}
		\| H_m( f_1,\dots,f_m ) ( x ) \| _{BMO( \mathbb{H}^n )}=\int_{\mathbb{H}^{nm}}{\frac{1}{\max \left( 1,| y_1 |_{h}^{n},\dots,| y_m |_{h}^{n} \right)}dy_1\cdots dy_m},
	\end{equation}
	
	\begin{equation}\label{necessary 22}
		\left\| T_m( f_1,\dots,f_m ) ( x ) \right\|_{BMO( \mathbb{H}^n )}=\int_{\mathbb{H}^{nm}}{\frac{1}{\max ( 1,| y_1 |_{h}^{n},\dots,| y_m |_{h}^{n} )}dy_1\cdots dy_m}.
	\end{equation}
	From $\eqref{necessary 2}$ and $\eqref{necessary 22}$, when $m\geq2$, it implies that $\eqref{sharp_bound_1}$ holds.

	This completes the proof of Theorem $\ref{main_1}$ and $\ref{main_11}$.

\end{proof}

\section{Further sharp bound for Hausdorff operator on Heisenberg group BMO space}

In this section, we will extend the boundedness of $m$-linear $n$-dimensional Hausdorff operators to generalized integral operator on on Heisenberg group BMO space.

\begin{definition}
	Let $m$ be a positive integer , $f_1,\dots,f_m$ be nonnegative locally integrable
	functions on $\mathbb{H}^n$. The $m$-linear $n$-dimensional Hausdorff operator  is defined by
	\begin{equation}
		G_{\varPhi}\left( f_1,\dots ,f_m \right) (x)=\int_{\mathbb{H} ^{nm}}{\frac{\varPhi \left( \frac{x}{\delta _{|y_1|_h}},\dots ,\frac{x}{\delta _{|y_m|_h}} \right)}{|y_1|_{h}^{n}\cdots |y_m|_{h}^{n}}f_1(y_1)\cdots}f_m(y_m)dy_1\cdots dy_m.
	\end{equation}
	where $x\in \mathbb{H}^n\backslash\left\{ 0\right\}$.
\end{definition}
For convenience, it is need to make a change of variable, then it follows that
\begin{equation}\label{identity 2}
	G_{\varPhi}(f_{1},\ldots,f_{m})(x)=\int_{\mathbb{H}^{nm}}{\frac{\varPhi ( y_1,\dots,y_m )}{| y_1 |_{h}^{n}\cdots | y_m |_{h}^{n}}f_1( x\delta _{| y_1 |_{h}^{-1}} ) \cdots}f_m( x\delta _{| y_m |_{h}^{-1}} ) dy_1\cdots dy_m.
\end{equation}
Using the same proof method, we can get
$$
\left\|G _{\varPhi}( f_1,\dots,f_m ) \right\| _{BMO( \mathbb{H}^n )}\leq \prod_{j=1}^m{\left\| f_j \right\| _{BMO\left(\mathbb{H}^n \right)}}\int_{\mathbb{H}^{nm}}{\frac{\varPhi \left( y_1,\dots,y_m \right)}{| y_1 |_{h}^{n}\cdots | y_m |_{h}^{n}}}dy_1\cdots dy_m.
$$

Using the similar method of the  proof of Theorem $\ref{main_1}$, we give the following result.

\begin{thm}\label{main_12}
	Let $m\in \mathbb{N}$,  if
	
	\begin{equation}\label{condition_5}
		F_{m}^{h}:=\int_{\mathbb{H}^{nm}}{\frac{\varPhi ( y_1,\dots,y_m )}{| y_1 |_{h}^{n}\cdots | y_m |_{h}^{n}}}dy_1\cdots dy_m< \infty,
	\end{equation}
	then $T_{m}^{h}$ is bounded from $BMO(\mathbb{H}^{n})\times\cdots \times BMO(\mathbb{H}^{n})$ to $BMO(\mathbb{H}^{n})$.
	\begin{equation}\label{sharp_bound_5}
		\|G_{\varPhi}\|_{\prod\limits_{j=1}^{m}BMO(\mathbb{H}^{n})\rightarrow BMO(\mathbb{H}^{n})}=F_{m}^{h}.
	\end{equation}
\end{thm}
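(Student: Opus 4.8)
The plan is to follow, almost verbatim, the template already used for Theorems \ref{main_1} and \ref{main_11}, since the only structural change is that the scalar kernels $\max(1,|y_1|_h^n,\dots)^{-m}$ and $(1+|y_1|_h^n+\cdots)^{-m}$ are replaced by the general weight $\Phi(y_1,\dots,y_m)/(|y_1|_h^n\cdots|y_m|_h^n)$, which enters every estimate only as a nonnegative density. Starting from the normalized identity \eqref{identity 2}, I would first prove the upper bound $\|G_\Phi\|\le F_m^h$. Compute the ball average $\left(G_\Phi(f_1,\dots,f_m)\right)_B$ by interchanging the order of integration (Fubini, justified by \eqref{condition_5} together with $f_j\in BMO\subset L^1_{loc}$), then use the left-invariance and the dilation scaling $|\delta_rE|=r^Q|E|$ of the Haar measure to rewrite each inner average $\frac{1}{|B|}\int_B f_j(x\delta_{|y_j|_h^{-1}})\,dx$ as the average $f_{|y_j|_hB}$ of $f_j$ over the dilated ball $|y_j|_hB$, exactly as in the displayed computation of $\left(H_m(\dots)\right)_B$.

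Next I would estimate the mean oscillation. Writing $G_\Phi(f_1,\dots,f_m)(x)-\left(G_\Phi(f_1,\dots,f_m)\right)_B$ as the $\Phi$-weighted integral of $\prod_j f_j(x\delta_{|y_j|_h^{-1}})-\prod_j f_{|y_j|_hB}$, pulling the absolute value inside and applying Fubini reduces matters to controlling, for each fixed $(y_1,\dots,y_m)$, the quantity $\frac{1}{|B|}\int_B\bigl|\prod_j f_j(x\delta_{|y_j|_h^{-1}})-\prod_j f_{|y_j|_hB}\bigr|\,dx$. The natural device is the telescoping identity $\prod_j a_j-\prod_j b_j=\sum_k\bigl(\prod_{i<k}a_i\bigr)(a_k-b_k)\bigl(\prod_{i>k}b_i\bigr)$, after which each factor $a_k-b_k=f_k(x\delta_{|y_k|_h^{-1}})-f_{|y_k|_hB}$ contributes, after the same dilation change of variable, an oscillation bounded by $\|f_k\|_{BMO(\mathbb{H}^n)}$ through $\frac{1}{||y_k|_hB|}\int_{|y_k|_hB}|f_k-f_{|y_k|_hB}|\le\|f_k\|_{BMO}$. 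Collecting the $m$ terms and integrating against the weight yields $\|G_\Phi(f_1,\dots,f_m)\|_{BMO}\le F_m^h\prod_{j=1}^m\|f_j\|_{BMO}$, hence $\|G_\Phi\|\le F_m^h$.

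For the reverse inequality (sharpness) I would test against the same function $f_0$ used earlier, namely $f_0(x)=\mathrm{sgn}(x_{2n+1})$. The key point is that $\delta_r$ multiplies the central coordinate $x_{2n+1}$ by $r^2>0$, so $f_0$ is invariant under every dilation: $f_0(x\delta_{|y_j|_h^{-1}})=f_0(x)$. Substituting $f_1=\cdots=f_m=f_0$ into \eqref{identity 2} pulls $\prod_j f_0(x)=f_0(x)^m$ out of the integral and gives $G_\Phi(f_0,\dots,f_0)(x)=f_0(x)^m\,F_m^h$. Since $\|f_0\|_{BMO}=1$, this produces the matching lower bound $\|G_\Phi\|\ge F_m^h$, and combining the two inequalities gives \eqref{sharp_bound_5}.

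The main obstacle is the multilinear oscillation step: a product $\prod_j f_j$ of BMO functions is not itself BMO in general, so the passage from the oscillation of the product to the product of individual BMO norms must be routed through the telescoping decomposition rather than asserted directly, and one must verify that the resulting cross terms remain integrable against $\Phi/(|y_1|_h^n\cdots|y_m|_h^n)$—which is precisely what \eqref{condition_5} guarantees. A secondary but genuine subtlety lies in the sharpness argument when $m$ is even: there $f_0(x)^m$ equals $1$ almost everywhere and hence has zero BMO norm, so the simple test function degenerates; to recover the lower bound $F_m^h$ one must replace it by a non-constant dilation-invariant function that survives raising to the $m$-th power, and checking that such a replacement still factors through \eqref{identity 2} is the most delicate point of the proof.
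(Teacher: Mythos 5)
Your upper-bound argument contains the fatal gap, and it is not one that telescoping can repair. In the $k$-th telescoping term
\begin{equation*}
\Bigl(\prod_{i<k} f_i(\delta_{|y_i|_h^{-1}}x)\Bigr)\bigl(f_k(\delta_{|y_k|_h^{-1}}x)-(f_k)_{|y_k|_hB}\bigr)\Bigl(\prod_{i>k} (f_i)_{|y_i|_hB}\Bigr),
\end{equation*}
the left-hand factors are unbounded functions of $x$ (BMO functions need not be bounded), and the right-hand factors are ball averages, which are not controlled by BMO seminorms at all: replacing $f_i$ by $f_i+c$ leaves $\|f_i\|_{BMO(\mathbb{H}^n)}$ unchanged but moves $(f_i)_{|y_i|_hB}$ arbitrarily. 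So averaging over $x\in B$ does not yield $\|f_k\|_{BMO}\prod_{i\neq k}\|f_i\|_{BMO}$, and condition \eqref{condition_5} cannot close this: it is an integrability condition in the $y$-variables, whereas the obstruction lives in the $x$-variable and in the additive constants. In fact, for $m\geq 2$ the claimed bound is simply false. Take $f_1\equiv c\neq 0$, $f_2(x)=\mathrm{sgn}(x_{2n+1})$, and $f_3=\cdots=f_m\equiv 1$ (use $\chi_{\{x_{2n+1}>0\}}$ in place of the sign if one insists on nonnegative entries). Then $\prod_{j=1}^{m}\|f_j\|_{BMO(\mathbb{H}^n)}=0$ since $\|f_1\|_{BMO(\mathbb{H}^n)}=0$, yet by dilation invariance of $f_2$ identity \eqref{identity 2} gives $G_{\varPhi}(f_1,\dots,f_m)(x)=c\,\mathrm{sgn}(x_{2n+1})F_m^h$, whose BMO seminorm is $|c|F_m^h>0$. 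The BMO seminorm annihilates constants while the multilinear operator does not, so no proof of \eqref{sharp_bound_5} can exist for $m\geq 2$; only the linear case $m=1$ is correct, and there your argument coincides with the valid one. You should also know that the paper itself offers no repair: its ``proof'' of this theorem is a one-line appeal to the method of Theorems \ref{main_1} and \ref{main_11}, and that method asserts the oscillation-of-a-product step as an (incorrect) equality, i.e.\ it silently commits exactly the error you tried to route around.

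Your sharpness discussion is actually the sounder half of the proposal, and in one respect sounder than the paper. Because the dilation in \eqref{identity 2} acts on $x$ rather than on $y_j$, one really does have $f_0(\delta_{|y_j|_h^{-1}}x)=f_0(x)$, hence $G_{\varPhi}(f_0,\dots,f_0)=f_0^mF_m^h$ and the lower bound $\|G_{\varPhi}\|\geq F_m^h$ for odd $m$ (by contrast, in the paper's Theorems \ref{main_1} and \ref{main_11} the dilation acts on the $y_j$, so there $H f_0\equiv 0$ by odd symmetry and the paper's own lower-bound computation is wrong even for $m=1$). Your observation that the test function degenerates for even $m$ (since $f_0^m=1$ a.e.) is correct, and your proposed remedy works: take, say, $f_m(x)=\mathrm{sgn}(x_1)$ and $f_j=f_0$ for $j<m$; then all factors are dilation invariant with BMO norm $1$ and the product $\mathrm{sgn}(x_1)\,\mathrm{sgn}(x_{2n+1})^{m-1}$ is nonconstant with BMO norm $1$. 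But this only salvages the lower bound; since the upper-bound half of \eqref{sharp_bound_5} fails for $m\geq 2$ as explained above, the statement itself, not merely your proof, is beyond rescue in the multilinear case.
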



\begin{thebibliography}{02}
	\bibitem{ben}B\'enyi, \'Arp\'ad and O. Tadahiro, Best constants for certain multilinear integral operators, J. Inequal. Appl., 2006: 1--12.
	
	\bibitem{chr}M. Christ, L. Grafakos, Best constants for two non-convolution inequalities, Proceedings of the American Mathematical Society, 1995, vol. 123(6): 1687--1687.
	
	\bibitem{chu} J. Chu, Z. Fuand Q. Wu, $L^p$ and BMO bounds for weighted Hardy operators on the Heisenberg group, Journalof Inequalities and Applications, 2016, vol. 281: 282.
	
	\bibitem{cou} T. Coulhon, D. M$\ddot{u}$ller and J. Zienkiewicz, About Riesz transforms on the Heisenberg groups, Math. Ann., 1996, 305: 369--379.
	
	\bibitem{fol} G. Folland and E. M.Stein, Hardy spaces on homogeneous groups, Princeton, N. J.Princeton University Press, 1982.
	
	\bibitem{Fu}Z. Fu, L. Grafakos, S. Lu, and F. Zhao, Sharp bounds for m-linear Hardy and Hilbert operators, Houston Journal of Mathematics,2012, vol. 38: 225--243.
	
	\bibitem{kor}A. Kor\'anyi and H. Reimann, Quasiconformal mappings on the Heisenberg group, Inventiones Mathematicae, 1985, vol. 80: 309--338.
	
	\bibitem{Shi}S. Shi, Q. Xue and K. Yabuta, On the boundedness of multilinear Littlewood-Paley $g_{\lambda}^{*}$ function,J. Math. Pure. Appl., 2014, 101(3): 394--413.
	
	\bibitem{tha} S. Thangavelu, Harmonic analysis on the Heisenberg group, Progress in Mathematics, 1998, vol. 159, Boston, MA: Birkhauser Boston.
	
	\bibitem{xiao}J. Xiao, $L^p$ and BMO bounds of weighted Hardy-littlewood averages, Journal of Mathematical Analysis and Applications, 2001, vol. 262: 660--666.
	
	\bibitem{Xue}Q. Xue, X. Peng, K. Yabuta, On the theory of multilinear Littlewood-Paley $g$-function, J.Math. Soc. Jpn., 2015, 67(2): 535--559.
	
	\bibitem{Xue}Q. Xue and J. Yan, On multilinear square function and its applications to multilinear Littlewood-P\'olya operators with non-convolution type kernels, J. Math. Anal. Appl., 2015, 422(2): 1342--1362.
	
	\bibitem{zha}G. Zhang, Q. Li and  Q. Wu, The Weighted and Estimates for Fractional Hausdorff Operators on the Heisenberg Group, Journal of Function Spaces, 2020.
\end{thebibliography}

\newpage

\begin{flushleft}

	\vspace{0.3cm}\textsc{Huan Liang\\School of Science\\Shandong Jianzhu University \\Jinan, 250000\\P. R. China}
	
	\emph{E-mail address}: \textsf{lianghuan202211@163.com}

	\vspace{0.3cm}\textsc{Xiang Li\\School of Science\\Shandong Jianzhu University\\Jinan, 250000\\P. R. China}
	
	\emph{E-mail address}: \textsf{lixiang162@mails.ucas.ac.cn}

	\vspace{0.3cm}\textsc{Dunyan Yan\\School of Mathematical Sciences\\University of Chinese Academy of Sciences\\Beijing, 100049\\P. R. China}
	
	\emph{E-mail address}: \textsf{ydunyan@163.com}

\end{flushleft}

\end{document}